\newtheorem{theorem}{Theorem}[section]
\newtheorem{corollary}[theorem]{Corollary}
\newtheorem{definition}[theorem]{Definition}
\newtheorem{lemma}[theorem]{Lemma}
\newtheorem{remark}[theorem]{Remark}
\newtheorem{proposition}[theorem]{Proposition}
\newcommand{\Sspace}{\mathcal{S}_{\psi,\omega}} 
\newcommand{\Sdist}{\mathcal{S}'_{\psi,\omega}} 
\newcommand{\WFT}{\mathcal{F}_{\psi,\omega}}     
\newcommand{\Trans}{\mathcal{T}}                 
\newcommand{\Sobolev}{H^s_{\psi,\omega}}         
\begin{document}
\title{Weighted Sobolev Spaces and Distributional Spectral \\ Theory for Generalized Aging Operators via Transmutation Methods.}

\author{
    \textbf{Gustavo Dorrego} \\[1ex]
    \small{Department of Mathematics, FACENA - UNNE} \\
    \small{Corrientes, Argentina} \\
    \small{\texttt{gadorrego@exa.unne.edu.ar}} 
}

\date{\today}

\maketitle

\begin{abstract}
    The spectral analysis of operators in heterogeneous and aging media typically requires a functional framework that extends beyond the standard Hilbertian setting. In this paper, we establish a rigorous distributional theory for a class of non-local operators, termed Weighted Weyl-Sonine operators, by employing a structure-preserving transmutation method. We construct the Weighted Schwartz Space $\mathcal{S}_{\psi,\omega}$ and its topological dual, the space of Weighted Tempered Distributions $\mathcal{S}'_{\psi,\omega}$, ensuring that the underlying Fréchet topology is consistent with the infinitesimal generator of the aging dynamics. This topological foundation allows us to: (i) extend the Weighted Fourier Transform ($\WFT$) to generalized functions as a unitary isomorphism; (ii) provide an explicit spectral characterization of the weighted Dirac delta $\delta_{\psi,\omega}$ and its scaling laws under geometric dilations; and (iii) introduce a scale of Weighted Sobolev Spaces $H^{s}_{\psi,\omega}$ defined via spectral multipliers. A central result is the derivation of a sharp embedding theorem, $|u(t)| \le C \omega(t)^{-1} ||u||_{H^s_{\psi,\omega}}$, which rigorously connects abstract spectral energy to the pointwise decay induced by the weight $\omega$. This framework provides a unified geometric characterization of several fractional regimes, including the Hadamard and Riemann-Liouville cases, within a single operator-theoretic architecture.

    \vspace{0.5cm}
    \noindent \textbf{Keywords:} Weighted Sobolev spaces; Transmutation operators; Distributional spectral theory; Weighted Fourier transform; Spectral multipliers.

\noindent \textbf{2020 MSC:} 42B10, 46E35, 46F05, 47G30.
\end{abstract}

\section{Introduction}

The spectral analysis of aging and heterogeneous systems has recently gained significant traction through the framework of Weighted Weyl-Sonine operators. In our previous work \cite{DorregoJMAA}, we established a Generalized Spectral Mapping Theorem, proving that the Weighted Fourier Transform $\WFT$ diagonalizes these non-local operators on the Hilbert space $L^2_{\psi,\omega}(\mathbb{R})$. That result provided a rigorous description of diffusive dynamics for signals with finite weighted energy, unifying models with power-law memory and those with multi-scale aging properties \cite{Metzler2000}.

However, the restriction to the Hilbertian setting $L^2_{\psi,\omega}$ imposes intrinsic limitations when dealing with idealized physical models and boundary value problems. Specifically:
\begin{itemize}
    \item \textbf{Singular Inputs:} Physical sources are often modeled as impulses (Dirac deltas) or abrupt steps, which possess infinite energy and thus lie outside the domain of the standard $L^2$-theory.
    \item \textbf{Growth at Infinity:} Signals that do not decay (e.g., constant background fields or polynomial growth) are excluded from the spectral formulation.
    \item \textbf{Regularity Constraints:} The rigorous treatment of boundary values and the quantification of "smoothing effects" in aging media require a scale of Sobolev spaces rather than a single energy space \cite{KufnerBook}.
\end{itemize}

\subsection{Main Contributions}
In this paper, we extend the spectral theory from the Hilbertian setting to the \textbf{distributional setting}. Following the philosophy of Strichartz \cite{StrichartzBook} and employing the transmutation methods of Shishkina \& Sitnik \cite{ShishkinaBook}, we construct a chain of topological spaces:
\begin{equation}
    \Sspace \subset L^2_{\psi,\omega} \subset \Sdist.
\end{equation}

This construction is not merely a formal generalization; it preserves the spectral structure of the aging operators. Our main results are:
\begin{enumerate}
    \item \textbf{Structure Transport:} We define the Weighted Schwartz Space $\Sspace$ and its dual $\Sdist$ via an isometric transmutation map, ensuring that the Weighted Fourier Transform extends to distributions as a unitary isomorphism consistent with Parseval's identity.
    \item \textbf{The Weighted Delta:} We derive the explicit form of the weighted Dirac delta, $\delta_{\psi,\omega} \sim (\omega^2 \psi')^{-1} \delta$, which reveals precisely how the medium's aging density and geometric dilation scale local impulses.
    \item \textbf{Sobolev Regularity:} We introduce the Weighted Sobolev Spaces $\Sobolev$ and prove a sharp embedding theorem, $|u(t)| \le C \omega(t)^{-1} \|u\|_{\Sobolev}$, which links abstract spectral energy to pointwise time-decay in the deformed medium.
\end{enumerate}

\subsection{Organization of the Paper}
The paper is organized as follows. Section 2 constructs the test function space $\Sspace$ based on the infinitesimal generator of the Weyl-Sonine operators. Section 3 develops the theory of Weighted Tempered Distributions and characterizes the weighted Dirac delta. Section 4 extends the Weighted Fourier Transform to distributions, justifying the spectral diagonalization of singular kernels. Finally, in Section 5, we define the Weighted Sobolev Spaces and establish the regularity and embedding theorems crucial for the analysis of aging diffusion equations.

\section{The Weighted Schwartz Space \texorpdfstring{$\Sspace$}{S\_psi,omega}}

Before formalizing the functional framework, we establish the class of admissible functions for the geometric scale $\psi$ and the weight $\omega$. Throughout this work, we assume the following:

\begin{enumerate}
    \item[\textbf{(H1)}] The scale function $\psi: \mathbb{R} \to \mathbb{R}$ is a $C^\infty$-diffeomorphism. Specifically, $\psi$ is strictly increasing, and $\psi'(t) > 0$ for all $t \in \mathbb{R}$.
    \item[\textbf{(H2)}] The weight function $\omega: \mathbb{R} \to (0, \infty)$ is a smooth function ($C^\infty$) such that $\omega(t) > 0$ for all $t$. Furthermore, we assume that $\omega$ and its derivatives possess at most polynomial growth to ensure that the multiplication operator is well-defined on $\mathcal{S}(\mathbb{R})$.
\end{enumerate}

These conditions guarantee that the transmutation operator $\Trans$ and its inverse $\Trans^{-1}$ are well-defined, smooth, and preserve the asymptotic behavior required for the Schwartz-type topology.

Following the spectral framework established in our previous work \cite{DorregoPaper1}, we construct the test function space via the canonical isometry that maps the deformed geometry to the Euclidean standard.

\begin{definition}[The Transmutation Operator]
    Let $\Trans: L^2_{\psi,\omega}(\mathbb{R}) \to L^2(\mathbb{R})$ be the unitary operator defined by:
    \begin{equation}
        (\Trans f)(y) := \omega(\psi^{-1}(y)) \cdot f(\psi^{-1}(y)).
    \end{equation}
    This operator acts as a "rectifier," transforming weighted signals into standard energy signals by compensating for the density $\omega$ and the geometric stretch $\psi$.
\end{definition}

\begin{definition}[Weighted Schwartz Space]
    In strict accordance with the spectral framework developed in \cite{DorregoJMAA}, we define the Weighted Schwartz Space $\Sspace$ as the set of smooth functions whose structure is preserved under the transmutation map.
    
    Explicitly, the topology is generated by the semi-norms induced by the \textbf{Fundamental Weighted Derivative} $\mathfrak{D}_{\psi,\omega}$, which serves as the infinitesimal generator of the Weyl-Sonine operators:
    
    \begin{equation}
        \rho_{k,m}(\phi) := \sup_{t \in \mathbb{R}} \left| \psi(t)^k \cdot \mathfrak{D}_{\psi,\omega}^m \phi(t) \right| < \infty, \quad \forall k, m \in \mathbb{N}_0,
    \end{equation}
    where the differential operator is defined as:
    \begin{equation}
        \mathfrak{D}_{\psi,\omega} \phi(t) := \frac{1}{\omega(t)\psi'(t)} \frac{d}{dt} \Big( \omega(t) \phi(t) \Big).
    \end{equation}
\end{definition}

\begin{remark}
    Note that we construct the semi-norms using the \textit{integer-order} differential operator $\mathfrak{D}_{\psi,\omega}$ rather than the non-local Weyl-Sonine integral operators. This ensures that $\Sspace$ is a Fréchet space of locally smooth functions, upon which the fractional Weyl-Sonine operators (defined in \cite{DorregoJMAA} as convolution multipliers) act continuously.
\end{remark}
\begin{proposition}[Density and Approximation]
    The space $\Sspace$ is dense in the Hilbert space $L^2_{\psi,\omega}(\mathbb{R})$. That is, for every square-integrable signal $f \in L^2_{\psi,\omega}$, there exists a sequence of test functions $\{\phi_n\}_{n=1}^\infty \subset \Sspace$ such that:
    \begin{equation}
        \lim_{n \to \infty} \| f - \phi_n \|_{L^2_{\psi,\omega}} = 0.
    \end{equation}
\end{proposition}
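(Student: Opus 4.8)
The plan is to transport the entire question to the flat Euclidean space $L^2(\mathbb{R})$ through the unitary transmutation operator $\Trans$ and to exploit the classical density of compactly supported smooth functions there. Since $\Trans$ is a surjective isometry, its inverse $\Trans^{-1}\colon L^2(\mathbb{R})\to L^2_{\psi,\omega}(\mathbb{R})$ is again an isometric isomorphism and in particular maps dense sets to dense sets. It therefore suffices to exhibit a subspace $\mathcal{D}\subseteq\Sspace$ whose $\Trans$-image is dense in $L^2(\mathbb{R})$; I would take $\mathcal{D}:=\Trans^{-1}\big(C_c^\infty(\mathbb{R})\big)$ and establish the two facts (i) $\mathcal{D}\subseteq\Sspace$ and (ii) $\mathcal{D}$ is dense in $L^2_{\psi,\omega}$. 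Fact (ii) is immediate from the isometry of $\Trans^{-1}$ together with the classical density of $C_c^\infty(\mathbb{R})$ in $L^2(\mathbb{R})$, so the real content is the inclusion (i).

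The mechanism behind (i) is a structural conjugation identity that I would establish first: the fundamental weighted derivative is intertwined by $\Trans$ with the ordinary derivative, namely
\begin{equation}
    \Trans\,\mathfrak{D}_{\psi,\omega} = \frac{d}{dy}\,\Trans, \qquad\text{hence}\qquad \Trans\,\mathfrak{D}_{\psi,\omega}^m = \frac{d^m}{dy^m}\,\Trans \quad (m\in\mathbb{N}_0).
\end{equation}
The base case is a direct chain-rule computation: writing $g=\Trans\phi$ so that $g(\psi(t))=\omega(t)\phi(t)$, differentiation in $t$ gives $g'(\psi(t))\psi'(t)=\frac{d}{dt}(\omega\phi)(t)$, which is exactly $\omega(t)\,\mathfrak{D}_{\psi,\omega}\phi(t)=g'(\psi(t))$, i.e. $\Trans(\mathfrak{D}_{\psi,\omega}\phi)=g'$; the general $m$ follows by induction. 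Unwinding the definition of $\Trans$ in this identity yields the pointwise formula $\mathfrak{D}_{\psi,\omega}^m\phi(t)=\omega(t)^{-1} g^{(m)}(\psi(t))$ for $\phi=\Trans^{-1}g$, and consequently the seminorms linearize on the flat side as
\begin{equation}
    \rho_{k,m}(\Trans^{-1}g) = \sup_{t\in\mathbb{R}} \left| \frac{\psi(t)^k}{\omega(t)}\, g^{(m)}(\psi(t)) \right| = \sup_{y\in\mathbb{R}} \frac{\left| y^k\, g^{(m)}(y) \right|}{\omega(\psi^{-1}(y))},
\end{equation}
where the last equality uses the substitution $y=\psi(t)$, legitimate because $\psi$ is a diffeomorphism of $\mathbb{R}$ by (H1).

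With this formula in hand, inclusion (i) reduces to a compact-support estimate. For $g\in C_c^\infty(\mathbb{R})$ each derivative $g^{(m)}$ is supported in a fixed compact set $K$; on $K$ the factor $y^k$ and the function $g^{(m)}$ are bounded, while $y\mapsto\omega(\psi^{-1}(y))$ is continuous and strictly positive by (H1)--(H2), hence bounded below by a positive constant on $K$. Therefore $\rho_{k,m}(\Trans^{-1}g)<\infty$ for every $k,m$, which proves $\Trans^{-1}g\in\Sspace$ and establishes $\mathcal{D}\subseteq\Sspace$. Combined with (ii), density follows.

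The point I would flag as the only genuine subtlety is that one should resist the temptation to prove the stronger claim $\Trans(\Sspace)=\mathcal{S}(\mathbb{R})$: the weight $1/\omega(\psi^{-1}(y))$ appearing in the linearized seminorms means the image is in general an $\omega$-weighted Schwartz space, and equality with $\mathcal{S}(\mathbb{R})$ would require two-sided polynomial control on $\omega\circ\psi^{-1}$ beyond the hypotheses (H1)--(H2). Routing the argument through $C_c^\infty(\mathbb{R})$ sidesteps this entirely, since compact support trivializes every growth issue and needs only the positivity and continuity of $\omega$. The remaining ingredients -- the induction for the intertwining identity and the change of variables -- are then routine.
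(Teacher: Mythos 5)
Your proof is correct, and while it shares the paper's overall strategy of transporting density through the unitary operator $\Trans$, it diverges from the paper's proof at the one step that carries the actual mathematical content. The paper approximates $\Trans f$ by classical Schwartz functions $g_n$ and asserts that $\phi_n = \Trans^{-1}g_n$ lies in $\Sspace$ ``by definition''; under the seminorm definition of $\Sspace$ actually given (finiteness of all $\rho_{k,m}$), that assertion is precisely the nontrivial inclusion $\Trans^{-1}(\mathcal{S}(\mathbb{R})) \subseteq \Sspace$, which --- as your linearized seminorm formula $\rho_{k,m}(\Trans^{-1}g) = \sup_{y}\, |y^k g^{(m)}(y)|\,/\,\omega(\psi^{-1}(y))$ makes visible --- can fail without a lower bound on $\omega$: hypothesis (H2) permits, for instance, $\omega(t) = e^{-t^2}$, for which $1/\omega(\psi^{-1}(y))$ outgrows the rapid decay of any Schwartz function. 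Your substitution of $C_c^\infty(\mathbb{R})$ for $\mathcal{S}(\mathbb{R})$, together with the correctly verified intertwining identity $\Trans\,\mathfrak{D}_{\psi,\omega} = \tfrac{d}{dy}\,\Trans$, reduces membership in $\Sspace$ to the boundedness of continuous functions on a compact set, and so needs only the continuity and strict positivity of $\omega$ and the diffeomorphism property of $\psi$. In short: the paper's argument buys brevity at the cost of an unproved inclusion (implicitly deferred to the subsequent Isomorphism theorem, itself stated without proof and requiring hypotheses beyond (H1)--(H2)), whereas your route through compactly supported test functions closes that gap and proves the proposition exactly as stated.
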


\begin{proof}
    The result follows immediately from the unitary property of $\Trans$. Since the classical Schwartz space $\mathcal{S}(\mathbb{R})$ is dense in $L^2(\mathbb{R})$, for any $g = \Trans f \in L^2(\mathbb{R})$, there exists a sequence $\{g_n\} \subset \mathcal{S}(\mathbb{R})$ converging to $g$. 
    Defining $\phi_n := \Trans^{-1} g_n$, we have $\phi_n \in \Sspace$ by definition, and the convergence is preserved isometrically:
    $$ \| f - \phi_n \|_{\psi,\omega} = \| \Trans^{-1}(g - g_n) \|_{\psi,\omega} = \| g - g_n \|_{L^2(\mathbb{R})} \to 0. $$
\end{proof}

\begin{remark}[Constructive Sequence: Weighted Hermite Functions]
    A concrete example of such an approximating sequence is given by the \textbf{Weighted Hermite Functions}. Let $h_n(y)$ be the classical Hermite functions (eigenfunctions of the standard Fourier transform). We define the weighted basis functions as:
    \begin{equation}
        \mathcal{H}_n^{\psi,\omega}(t) := (\Trans^{-1} h_n)(t) = \frac{1}{\omega(t)} h_n(\psi(t)).
    \end{equation}
    Since $\{h_n\}$ forms an orthonormal basis for $L^2(\mathbb{R})$ and $h_n \in \mathcal{S}(\mathbb{R})$, the collection $\{\mathcal{H}_n^{\psi,\omega}\}_{n \in \mathbb{N}_0}$ forms an orthonormal basis for $L^2_{\psi,\omega}$ contained entirely within $\Sspace$. Thus, the partial sum expansion of any signal $f$ converges in the weighted norm.
\end{remark}

\begin{theorem}[Isomorphism]
    The operator $\Trans$ is a topological isomorphism between $\Sspace$ and $\mathcal{S}(\mathbb{R})$. Consequently, the Weighted Fourier Transform satisfies $\WFT = \mathcal{F} \circ \Trans$, allowing us to import the entire machinery of classical harmonic analysis.
\end{theorem}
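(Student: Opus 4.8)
The plan is to reduce the entire statement to two \emph{exact} intertwining identities satisfied by $\Trans$, from which the equivalence of the two defining semi-norm families can be read off directly. Since $\Trans$ is already a unitary bijection $L^2_{\psi,\omega}(\mathbb{R}) \to L^2(\mathbb{R})$ with explicit inverse $(\Trans^{-1}g)(t) = \omega(t)^{-1} g(\psi(t))$, injectivity is free and smoothness is preserved in both directions (composition with the $C^\infty$-diffeomorphism $\psi$ and multiplication by the smooth positive weight $\omega$). The real content is therefore (a) that $\Trans$ carries $\Sspace$ \emph{onto} $\mathcal{S}(\mathbb{R})$, and (b) that it is bicontinuous for the respective Fréchet topologies; I will obtain both simultaneously from a pointwise comparison of $\rho_{k,m}(\phi)$ with the classical semi-norm $p_{k,m}(\Trans\phi) = \sup_y |y^k \partial_y^m (\Trans\phi)(y)|$.

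First I would establish, by a direct chain-rule computation with $t = \psi^{-1}(y)$, the differentiation identity $\partial_y \circ \Trans = \Trans \circ \mathfrak{D}_{\psi,\omega}$. This is precisely the algebraic reason the generator was defined as $\mathfrak{D}_{\psi,\omega} = \frac{1}{\omega\psi'}\frac{d}{dt}(\omega\,\cdot\,)$: the factors $\omega$ and $\psi'$ are exactly what differentiating $\omega(\psi^{-1}(y))\phi(\psi^{-1}(y))$ produces. Iterating yields $\partial_y^m \Trans = \Trans \mathfrak{D}_{\psi,\omega}^m$ with no Leibniz remainder. Second, since $\psi(\psi^{-1}(y)) = y$, multiplication intertwines cleanly, $\Trans \circ M_\psi = M_y \circ \Trans$, where $M_\psi, M_y$ denote multiplication by $\psi(t)$ and by $y$. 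Combining the two gives $y^k \partial_y^m (\Trans\phi) = \Trans(\psi^k \mathfrak{D}_{\psi,\omega}^m \phi)$, and evaluating $\Trans$ explicitly produces the master formula $p_{k,m}(\Trans\phi) = \sup_{t}\, \omega(t)\,\bigl|\psi(t)^k \mathfrak{D}_{\psi,\omega}^m \phi(t)\bigr|$.

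The master formula differs from $\rho_{k,m}(\phi)$ by exactly the factor $\omega(t)$, and controlling this factor is the \textbf{main obstacle}. The families $\{\rho_{k,m}\}$ and $\{p_{k,m}\circ\Trans\}$ generate the same topology precisely when $\omega$ is two-sidedly comparable to a polynomial in the scale $\psi$, i.e. $c_1 (1+\psi^2)^{-a} \le \omega \le c_2 (1+\psi^2)^{b}$. The upper bound (from the polynomial growth of $\omega$ in (H2)) gives $p_{k,m}(\Trans\phi) \le C\sum_j \rho_{k+2j,m}(\phi)$, the continuity of $\Trans$; the lower bound — equivalently, polynomial control of $1/\omega$, which one must ensure as part of the admissibility class and is the point requiring genuine care — gives $\rho_{k,m}(\phi) \le C\sum_j p_{k+2j,m}(\Trans\phi)$, the continuity of $\Trans^{-1}$. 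Surjectivity onto $\mathcal{S}(\mathbb{R})$ then drops out of the same estimates applied to $\Trans^{-1}g = \omega^{-1}\,g\circ\psi$. I note that if one prefers an exact (isometric) match rather than a mere equivalence, it suffices to replace $\rho_{k,m}$ by $\tilde\rho_{k,m}(\phi) = \sup_t \omega(t)\,|\psi(t)^k \mathfrak{D}_{\psi,\omega}^m\phi(t)|$, which $\Trans$ carries verbatim onto the classical generating family.

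Finally, the displayed relation $\WFT = \mathcal{F}\circ\Trans$ follows immediately: it exhibits the Weighted Fourier Transform as the composite of the topological isomorphism $\Trans:\Sspace\to\mathcal{S}(\mathbb{R})$ just established with the classical Fourier isomorphism $\mathcal{F}:\mathcal{S}(\mathbb{R})\to\mathcal{S}(\mathbb{R})$. Hence $\WFT$ is itself a topological isomorphism of $\Sspace$ onto $\mathcal{S}(\mathbb{R})$, and every structural property of $\mathcal{F}$ — continuity, inversion, and the exchange of multiplication and differentiation — transports through $\Trans$, which is exactly what is needed to extend the theory to $\Sdist$ by duality.
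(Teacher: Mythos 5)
Your intertwining computation is correct and is the right way to prove this: $\partial_y\circ\Trans=\Trans\circ\mathfrak{D}_{\psi,\omega}$ and $\Trans\circ M_\psi=M_y\circ\Trans$ both check out, and they combine into the master formula $\sup_y|y^k\partial_y^m(\Trans\phi)(y)|=\sup_t\omega(t)\,|\psi(t)^k\mathfrak{D}_{\psi,\omega}^m\phi(t)|$. The paper, by contrast, offers no proof at all: the theorem is followed only by the commutative diagram and a remark that the topology is ``inherited'' by structure transport, which silently presupposes that $\Sspace$ is \emph{defined} as $\Trans^{-1}(\mathcal{S}(\mathbb{R}))$ with the transported topology, whereas the paper's actual definition generates the topology from the semi-norms $\rho_{k,m}$, which lack the factor $\omega(t)$ that your master formula produces. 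You have therefore located a genuine gap in the statement as it stands: under (H1)--(H2) alone the two semi-norm families need not be equivalent. Indeed, (H2) bounds $\omega$ polynomially in $t$, not in $\psi(t)$ (for slowly growing scales, powers of $\psi$ do not dominate polynomials in $t$), and nothing in (H1)--(H2) bounds $1/\omega$ at all; for instance $\omega(t)=e^{-t^2}$ with $\psi(t)=t$ satisfies both hypotheses yet makes $\Trans^{-1}$ fail to map $\mathcal{S}(\mathbb{R})$ into the $\rho$-defined space. Your proposed repairs --- adding two-sided comparability $c_1(1+\psi^2)^{-a}\le\omega\le c_2(1+\psi^2)^{b}$ to the admissibility class, or replacing $\rho_{k,m}$ by $\tilde\rho_{k,m}(\phi)=\sup_t\omega(t)|\psi(t)^k\mathfrak{D}_{\psi,\omega}^m\phi(t)|$, which $\Trans$ carries verbatim onto the classical generating family --- are exactly what is needed, and either one makes the theorem true by your argument. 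One further inconsistency to flag, in the paper rather than in your write-up: the displayed relation $\WFT=\mathcal{F}\circ\Trans$ has codomain $\mathcal{S}(\mathbb{R})$, whereas the diagram presents $\WFT$ as a map $\Sspace\to\Sspace$, i.e.\ $\Trans^{-1}\circ\mathcal{F}\circ\Trans$; your concluding paragraph is valid under either reading.
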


\begin{center}
\begin{tikzcd}[row sep=large, column sep=huge, ampersand replacement=\&]
    \Sspace \arrow[r, "\WFT"] \arrow[d, "\Trans"', "\cong"] 
    \& \Sspace \arrow[d, "\Trans", "\cong"'] \\ 
    \mathcal{S}(\mathbb{R}) \arrow[r, "\mathcal{F}"] 
    \& \mathcal{S}(\mathbb{R})
\end{tikzcd}
\end{center}
\begin{remark}[Topological Consistency]
    It is crucial to emphasize that since $\Trans$ is a unitary isomorphism, the structural and topological properties of $\Sspace$—including completeness, separability, and its Fréchet nature—are inherited directly from the classical Schwartz space. This "structure transport" approach bypasses the necessity for an independent construction of the underlying functional analysis, ensuring that the distributional framework developed in the subsequent sections is inherently well-posed and consistent with the Euclidean theory.
\end{remark}
\section{Weighted Tempered Distributions \texorpdfstring{$\Sdist$}{S'\_psi,omega}}

Having established the test function space, we now define generalized functions in the weighted setting via topological duality.

\begin{definition}[Distributions via Transmutation]
    The space of Weighted Tempered Distributions $\Sdist$ is defined as the topological dual of $\Sspace$. The duality pairing between a distribution $T \in \Sdist$ and a test function $\phi \in \Sspace$, denoted by $(T, \phi)_{\psi,\omega}$, is defined to ensure compatibility with the transmutation structure:
    \begin{equation}
        (T, \phi)_{\psi,\omega} := (\Trans^{-1} T, \Trans^{-1} \phi)_{L^2},
    \end{equation}
    where $(\cdot, \cdot)_{L^2}$ denotes the standard distributional pairing on $\mathbb{R}$.
\end{definition}

A critical consequence of this definition is the characterization of singular sources. In physical applications, impulsive inputs are modeled by the Dirac delta. The following Lemma clarifies how the medium's heterogeneity distorts these impulses.

\begin{lemma}[Representation of the Weighted Delta]
\label{lem:weighted_delta}
    Let $\delta_{\psi,\omega}(\cdot-\tau)$ be the weighted Dirac delta concentrated at $\tau \in \mathbb{R}$, defined by the canonical sampling property:
    \begin{equation}
        (\delta_{\psi,\omega}(\cdot-\tau), \phi)_{\psi,\omega} = \phi(\tau), \quad \forall \phi \in \Sspace.
    \end{equation}
    This distribution admits the explicit representation in terms of the standard Dirac delta $\delta$:
    \begin{equation}
        \delta_{\psi,\omega}(t-\tau) = \frac{1}{\omega(\tau)^2 \psi'(\tau)} \delta(t-\tau).
    \end{equation}
\end{lemma}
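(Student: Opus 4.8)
The plan is to reduce the identity to two elementary ingredients: the explicit form of the weighted measure induced by $\Trans$, and the fact that a weighted tempered distribution is completely determined by its action on $\Sspace$. First I would make the duality pairing concrete on regular elements. Using the definition of $(\cdot,\cdot)_{\psi,\omega}$ together with the unitarity of $\Trans$ and the change of variables $y=\psi(t)$, $dy=\psi'(t)\,dt$, one checks that for a locally integrable $f$ the induced weighted distribution acts by
\begin{equation}
    (f,\phi)_{\psi,\omega} = \int_{\mathbb{R}} f(t)\,\phi(t)\,\omega(t)^2\,\psi'(t)\,dt ,
\end{equation}
so that the ambient measure governing the embedding $L^2_{\psi,\omega}\hookrightarrow\Sdist$ is $d\mu(t) := \omega(t)^2\,\psi'(t)\,dt$. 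This single identification is the structural heart of the computation, since it is precisely the density $\omega^2\psi'$ that will reappear (inverted) in the final formula.

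With the measure identified, it suffices to verify that the proposed representative reproduces the sampling property. Interpreting the candidate $c(\tau)\,\delta(\cdot-\tau)$ through the embedding above and pairing against an arbitrary $\phi\in\Sspace$ gives
\begin{equation}
    \big(c(\tau)\,\delta(\cdot-\tau),\,\phi\big)_{\psi,\omega}
    = c(\tau)\int_{\mathbb{R}} \delta(t-\tau)\,\phi(t)\,\omega(t)^2\,\psi'(t)\,dt
    = c(\tau)\,\omega(\tau)^2\,\psi'(\tau)\,\phi(\tau).
\end{equation}
Choosing $c(\tau)=\big(\omega(\tau)^2\psi'(\tau)\big)^{-1}$ makes the right-hand side equal to $\phi(\tau)$, which is exactly the defining sampling property of $\delta_{\psi,\omega}(\cdot-\tau)$; since $\Sspace$ separates the points of $\Sdist$, the two distributions coincide, yielding the claimed representation.

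As an independent confirmation, intrinsic to the transmutation framework, I would also run the computation on the flat side. Writing $\phi=\Trans^{-1}g$ with $g\in\mathcal{S}(\mathbb{R})$ gives $\phi(\tau)=\omega(\tau)^{-1}g(\psi(\tau))$, so the sampling functional transports to $\omega(\tau)^{-1}\,\delta(\cdot-\psi(\tau))$ acting on $g$ in $L^2(\mathbb{R})$; transporting this back through $\Trans$ and invoking the scaling law for the Dirac mass under the diffeomorphism $\psi$, namely $\delta(\psi(t)-\psi(\tau))=\psi'(\tau)^{-1}\delta(t-\tau)$, regenerates the factor $\omega(\tau)^2\psi'(\tau)$ in the denominator, in agreement with the direct argument.

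The step I expect to require the most care is the bookkeeping of the two distinct sources of the prefactor: the weight contributes $\omega^2$—note the square, which originates from the $\omega^2$ in the weighted inner product rather than the single power of $\omega$ in the definition of $\Trans$—while the geometry contributes the Jacobian $\psi'$ through the delta scaling law. Keeping the embedding convention for standard distributions into $\Sdist$ consistent with the function case (always integrating against $d\mu$) is precisely what guarantees that these factors combine correctly and do not produce a spurious power of $\omega$.
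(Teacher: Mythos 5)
Your proposal is correct and follows essentially the same route as the paper: identify the ambient weighted measure $d\mu(t)=\omega(t)^2\psi'(t)\,dt$ and verify that the ansatz $c(\tau)\delta(\cdot-\tau)$ with $c(\tau)=(\omega(\tau)^2\psi'(\tau))^{-1}$ reproduces the sampling property via the sifting identity. Your additions—deriving the measure explicitly from the transmutation pairing, noting that $\Sspace$ separates points of $\Sdist$ for uniqueness, and the cross-check on the flat side—are sound refinements of the same argument rather than a different proof.
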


\begin{proof}
    We proceed formally by identifying the distribution with its action under the weighted measure $d\mu_{\psi,\omega}(t) = \omega(t)^2 \psi'(t) \, dt$. We seek a generalized function $D(t)$ such that:
    \begin{equation*}
        \int_{-\infty}^{\infty} D(t) \phi(t) \, \omega(t)^2 \psi'(t) \, dt = \phi(\tau).
    \end{equation*}
    Substituting the ansatz $D(t) = \frac{1}{\omega(\tau)^2 \psi'(\tau)} \delta(t-\tau)$ into the integral:
    \begin{align*}
        I &= \int_{-\infty}^{\infty} \left[ \frac{\delta(t-\tau)}{\omega(\tau)^2 \psi'(\tau)} \right] \phi(t) \, \omega(t)^2 \psi'(t) \, dt \\
          &= \frac{1}{\omega(\tau)^2 \psi'(\tau)} \int_{-\infty}^{\infty} \delta(t-\tau) \left[ \phi(t) \omega(t)^2 \psi'(t) \right] \, dt.
    \end{align*}
    By the sifting property of the standard Dirac delta, the integral evaluates the term in brackets at $t=\tau$:
    \begin{equation*}
        I = \frac{1}{\omega(\tau)^2 \psi'(\tau)} \cdot \left[ \phi(\tau) \omega(\tau)^2 \psi'(\tau) \right] = \phi(\tau).
    \end{equation*}
    This confirms that the scaling factor $(\omega^2 \psi')^{-1}$ is the necessary Radon-Nikodym derivative required to normalize the impulse against the medium's density.
\end{proof}

\begin{remark}[Geometric Dilution]
    The scaling factor $\mathcal{J}(\tau) = [\omega(\tau)^2 \psi'(\tau)]^{-1}$ has a profound physical interpretation. In regions where the medium is "dense" (large $\omega$) or where the effective time expands rapidly (large $\psi'$), the impact of a standard local impulse is diluted. Consequently, the source term must be renormalized (amplified) by the geometry of the space to achieve a unit effect on the spectral components.
\end{remark}

\section{The Distributional Weighted Fourier Transform}

In this section, we justify the application of the Weighted Fourier Transform $\WFT$ to singular kernels. This extension relies on the topological duality established in Section 3 and strictly follows the algebraic construction provided in the Appendix of our previous work \cite{DorregoJMAA}.

\subsection{Definition via Parseval's Identity}
Since $\WFT$ is a unitary isomorphism on the test space $\Sspace$, the standard Parseval identity $\int \widehat{u} v = \int u \widehat{v}$ holds. We use this symmetry to define the transform on distributions via duality.

\begin{definition}[Distributional Weighted Fourier Transform]
    Let $T \in \Sdist$ be a weighted tempered distribution. We define its Weighted Fourier Transform, denoted by $\widehat{T} = \WFT[T]$, as the unique functional in $\Sdist$ satisfying:
    \begin{equation}
        (\widehat{T}, \varphi)_{\psi,\omega} := (T, \WFT[\varphi])_{\psi,\omega}, \quad \forall \varphi \in \Sspace.
    \end{equation}
    Here, $(\cdot, \cdot)_{\psi,\omega}$ denotes the duality pairing defined in Definition 3.1.
\end{definition}

\begin{remark}[Consistency with Plancherel]
    This definition ensures that the spectral mapping property holds rigorously for singular convolution kernels. If $k \in \Sdist$ represents a generalized memory kernel, the convolution theorem extends as:
    \begin{equation}
        \WFT \left[ \mathfrak{D}_{\psi, \omega}^{(k)} u \right] = \widehat{k} \cdot \widehat{u},
    \end{equation}
    where the product on the right-hand side is understood as the multiplication of a distribution by a smooth spectral symbol (or via the exchange formula).
\end{remark}

\begin{theorem}[Spectral Mapping for Aging Operators]
    Specifically, for the Weyl-Sonine aging operators discussed in \cite{DorregoJMAA}, the kernel $k$ is associated with a Bernstein symbol $\Phi(i\xi)$. The distributional transform diagonalizes the operator algebraically:
    \begin{equation}
        \widehat{\mathfrak{D}_{\psi,\omega}^{(\Phi)} T}(\xi) = \Phi(i\xi) \cdot \widehat{T}(\xi).
    \end{equation}
\end{theorem}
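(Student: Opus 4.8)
The plan is to follow the paper's own ``structure transport'' philosophy: conjugate the aging operator to a classical Euclidean Fourier multiplier through the transmutation $\Trans$, invoke the elementary spectral mapping for multipliers, and transport the identity back through the duality pairing. First I would record the test-function identity as the point of departure. By the Spectral Mapping result of \cite{DorregoJMAA}, for every $\varphi \in \Sspace$ one has $\WFT[\mathfrak{D}_{\psi,\omega}^{(\Phi)}\varphi] = \Phi(i\xi)\,\WFT[\varphi]$. Combining this with the Isomorphism Theorem $\WFT = \mathcal{F}\circ\Trans$ shows that the aging operator is conjugate, through $\Trans$, to the classical Fourier multiplier $A_\Phi := \mathcal{F}^{-1} M_\Phi \mathcal{F}$ with symbol $\Phi(i\xi)$, namely $\Trans\,\mathfrak{D}_{\psi,\omega}^{(\Phi)}\,\Trans^{-1} = A_\Phi$ on $\mathcal{S}(\mathbb{R})$. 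Since $\Phi$ is a Bernstein symbol, $\Phi(i\xi)$ is a slowly increasing smooth function (away from the origin), so $A_\Phi$ maps $\mathcal{S}(\mathbb{R})$ continuously into itself; transporting back, $\mathfrak{D}_{\psi,\omega}^{(\Phi)}$ is a continuous endomorphism of $\Sspace$.

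Next I would extend the operator to $\Sdist$ by duality and check consistency with the transmutation. I define $\mathfrak{D}_{\psi,\omega}^{(\Phi)}T$ for $T \in \Sdist$ by $(\mathfrak{D}_{\psi,\omega}^{(\Phi)}T, \varphi)_{\psi,\omega} := (T, (\mathfrak{D}_{\psi,\omega}^{(\Phi)})^{t}\varphi)_{\psi,\omega}$, where the transpose is taken in the weighted bilinear pairing. Because $\Trans$ is an isometric isomorphism, transposition commutes with transmutation, so $(\mathfrak{D}_{\psi,\omega}^{(\Phi)})^{t}$ corresponds under $\Trans$ to the classical transpose $A_\Phi^{t}$ — again a Fourier multiplier, with reflected symbol $\Phi(-i\xi)$. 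Continuity of $A_\Phi^{t}$ on $\mathcal{S}(\mathbb{R})$ then guarantees $\mathfrak{D}_{\psi,\omega}^{(\Phi)}T \in \Sdist$, so the distributional operator is well-defined.

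The central computation is a direct chain of definitions. For arbitrary $\varphi \in \Sspace$, the definition of the distributional $\WFT$ and of the distributional operator give $(\widehat{\mathfrak{D}_{\psi,\omega}^{(\Phi)}T}, \varphi)_{\psi,\omega} = (\mathfrak{D}_{\psi,\omega}^{(\Phi)}T, \WFT\varphi)_{\psi,\omega} = (T, (\mathfrak{D}_{\psi,\omega}^{(\Phi)})^{t} \WFT\varphi)_{\psi,\omega}$. The crux is the test-function exchange identity $(\mathfrak{D}_{\psi,\omega}^{(\Phi)})^{t} \WFT[\varphi] = \WFT[\Phi(i\xi)\varphi]$, asserting that the non-local physical operator becomes multiplication on the spectral side; it follows from the transpose definition together with the bilinear Parseval identity $(\WFT u, v)_{\psi,\omega}=(u,\WFT v)_{\psi,\omega}$, the self-transpose of multiplication, and the starting identity. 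Substituting and applying the definition of the distributional $\WFT$ once more yields $(T, \WFT[\Phi(i\xi)\varphi])_{\psi,\omega} = (\widehat{T}, \Phi(i\xi)\varphi)_{\psi,\omega} = (\Phi(i\xi)\widehat{T}, \varphi)_{\psi,\omega}$, the last equality holding because multiplication by the smooth symbol $\Phi(i\xi)$ is self-transpose in the bilinear pairing. As $\varphi$ ranges over $\Sspace$, this establishes $\widehat{\mathfrak{D}_{\psi,\omega}^{(\Phi)}T} = \Phi(i\xi)\widehat{T}$ in $\Sdist$.

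I expect the main obstacle to lie not in this algebra but in the multiplier regularity of the symbol $\Phi(i\xi)$. Genuinely fractional Bernstein symbols — for instance $\Phi(\lambda)=\lambda^{\alpha}$, giving $(i\xi)^{\alpha}$ — are smooth and polynomially bounded away from the origin but carry a branch singularity at $\xi=0$ for non-integer $\alpha$, hence need not belong to the multiplier algebra $\mathcal{O}_M(\mathbb{R})$ of slowly increasing smooth functions. The delicate step is therefore to make precise sense of the product $\Phi(i\xi)\widehat{T}$ and of the continuity of $\mathfrak{D}_{\psi,\omega}^{(\Phi)}$ on $\Sdist$: one must either restrict to symbols that are genuine $\mathcal{O}_M$-multipliers, regularize the origin by interpreting $(i\xi)^{\alpha}$ as a homogeneous (finite-part) distribution, or pass to the closed subspace of test functions whose weighted transforms vanish to sufficient order at $\xi=0$. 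Once the symbol is certified as a continuous multiplier on $\Sspace$ and on $\Sdist$, the transport argument above closes automatically.
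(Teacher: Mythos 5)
The paper offers no proof of this theorem at all: it is asserted immediately after the duality definition of $\WFT$ on $\Sdist$ and the ``Consistency with Plancherel'' remark, with the product $\Phi(i\xi)\cdot\widehat{T}$ left to be ``understood as the multiplication of a distribution by a smooth spectral symbol (or via the exchange formula).'' Your transposition argument is therefore not a variant of the paper's proof but a replacement for a missing one, and it is the right one: you define $\mathfrak{D}_{\psi,\omega}^{(\Phi)}$ on $\Sdist$ by duality, derive the exchange identity $(\mathfrak{D}_{\psi,\omega}^{(\Phi)})^{t}\WFT[\varphi]=\WFT[\Phi(i\xi)\varphi]$ from the test-function spectral mapping of the cited prior work, and close the chain of pairings; the bookkeeping of the bilinear transpose and the reflected symbol is handled correctly, and the whole argument is exactly the ``structure transport'' the paper advertises but does not execute here. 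Your final caveat is the substantive point: for a genuinely fractional Bernstein symbol such as $\Phi(\lambda)=\lambda^{\alpha}$ the function $\xi\mapsto(i\xi)^{\alpha}$ is not smooth at the origin, hence not in $\mathcal{O}_M(\mathbb{R})$, so neither the product $\Phi(i\xi)\widehat{T}$ nor the continuity of $\varphi\mapsto\Phi(i\xi)\varphi$ on the spectral side of $\Sspace$ is automatic. This is a genuine gap in the theorem as stated in the paper, not in your argument; any complete proof must adopt one of the remedies you list (restricting the symbol class, a finite-part regularization at $\xi=0$, or working on test functions whose transforms vanish to sufficient order at the origin), and the paper should say which.
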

\section{Weighted Sobolev Spaces \texorpdfstring{$\Sobolev$}{H\textasciicircum s\_psi,omega}}

With the distributional machinery in place, we quantify the regularity of solutions using energy norms. This allows us to move from formal operational calculus to rigorous analysis.

\begin{definition}[Spectral Definition]
    For any real number $s \in \mathbb{R}$, the Weighted Sobolev Space $\Sobolev$ is defined as the subspace of distributions $u \in \Sdist$ satisfying:
    \begin{equation}
        \|u\|_{\Sobolev}^2 := \int_{-\infty}^{\infty} (1+|\xi|^2)^s \left| \WFT[u](\xi) \right|^2 d\xi < \infty.
    \end{equation}
\end{definition}

\begin{proposition}[Completeness and Isometry]
    The space $\Sobolev$ is a Hilbert space. Furthermore, the transmutation operator restricts to a unitary isomorphism $\Trans: \Sobolev \to H^s(\mathbb{R})$, mapping weighted regularity directly to classical Sobolev regularity.
\end{proposition}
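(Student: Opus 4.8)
The plan is to prove everything by transport of structure, reducing each claim to its classical counterpart through the factorization $\WFT=\mathcal{F}\circ\Trans$ recorded in the Isomorphism Theorem. The key observation is that the defining quantity of the $\Sobolev$-norm is, after this factorization, literally the classical $H^s(\mathbb{R})$-norm of the transmuted object, so that $\Trans$ cannot help but be an isometry onto $H^s(\mathbb{R})$.

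First I would lift the identity $\WFT=\mathcal{F}\circ\Trans$ from $\Sspace$, where it holds by the Isomorphism Theorem, to the full dual $\Sdist$. Because $\Trans$ is a topological isomorphism of test spaces, its transpose provides an isomorphism $\Trans:\Sdist\to\mathcal{S}'(\mathbb{R})$, and the duality definitions of $\WFT$ (Definition 4.1) and of the classical $\mathcal{F}$ are compatible with it; hence $\WFT[u]=\mathcal{F}[\Trans u]$ as elements of $\mathcal{S}'(\mathbb{R})$ for every $u\in\Sdist$. Substituting this into the spectral definition of the norm gives at once
$$
\|u\|_{\Sobolev}^2=\int_{\mathbb{R}}(1+|\xi|^2)^s\,|\mathcal{F}[\Trans u](\xi)|^2\,d\xi=\|\Trans u\|_{H^s(\mathbb{R})}^2,
$$
so that $u\in\Sobolev$ if and only if $\Trans u\in H^s(\mathbb{R})$, with equality of norms. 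Bijectivity is then formal: $\Trans$ is injective on $\Sdist$, and for any $g\in H^s(\mathbb{R})$ the element $u:=\Trans^{-1}g$ satisfies $\|u\|_{\Sobolev}=\|g\|_{H^s}<\infty$ and hence lies in $\Sobolev$, giving surjectivity. Thus $\Trans:\Sobolev\to H^s(\mathbb{R})$ is a surjective isometry, i.e.\ a unitary isomorphism.

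Next I would endow $\Sobolev$ with its Hilbert structure by pullback, setting
$$
\langle u,v\rangle_{\Sobolev}:=\int_{\mathbb{R}}(1+|\xi|^2)^s\,\WFT[u](\xi)\,\overline{\WFT[v](\xi)}\,d\xi,
$$
which is a genuine inner product inducing the norm above (this is simply $\langle\Trans u,\Trans v\rangle_{H^s(\mathbb{R})}$ transported back). Completeness is then inherited through the isometry: any $\Sobolev$-Cauchy sequence $\{u_n\}$ maps to an $H^s(\mathbb{R})$-Cauchy sequence $\{\Trans u_n\}$, which converges to some $g$ by the classical completeness of $H^s(\mathbb{R})$, whence $u_n\to\Trans^{-1}g$ in $\Sobolev$.

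The hard part will not be the algebra, which is routine, but the first step: verifying that the factorization and the two distributional transforms are genuinely compatible on all of $\Sdist$, so that the interchange $\WFT[u]=\mathcal{F}[\Trans u]$ is legitimate for arbitrary distributions rather than only for test functions. Tied to this is the tacit point that finiteness of the spectral integral forces $\WFT[u]$ to be represented by an honest $L^2$-type function, which is precisely what makes the norm well-defined; this too is imported from the corresponding classical fact for $H^s(\mathbb{R})$ once the transport is established.
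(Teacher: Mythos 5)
Your proof is correct and follows exactly the transport-of-structure route the paper intends: the paper in fact states this proposition without any proof, and the identity $\|u\|_{\Sobolev}=\|\Trans u\|_{H^s(\mathbb{R})}$ you derive is precisely what is later invoked as ``by isometry'' in the proof of Theorem~\ref{thm:embedding}. Your added care in justifying the extension of $\WFT=\mathcal{F}\circ\Trans$ from $\Sspace$ to all of $\Sdist$ via the transpose of the test-space isomorphism supplies a step the paper leaves implicit, and is the right way to close that gap.
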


One of the most physically relevant results of this work is the following embedding theorem, which relates abstract spectral energy to the pointwise behavior of the signal in the aging medium.

\begin{theorem}[Weighted Sobolev Embedding]
\label{thm:embedding}
    Let $s > 1/2$. Then, there is a continuous embedding $\Sobolev \hookrightarrow C^0_{\psi,\omega}(\mathbb{R})$. Explicitly, if $u \in \Sobolev$, then $u$ corresponds to a continuous function satisfying the pointwise decay bound:
    \begin{equation}
        |u(t)| \leq \frac{C_s}{\omega(t)} \|u\|_{\Sobolev}, \quad \forall t \in \mathbb{R},
    \end{equation}
    where $C_s$ is the embedding constant of the classical space $H^s(\mathbb{R})$.
\end{theorem}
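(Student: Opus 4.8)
The plan is to exploit the unitary isomorphism $\Trans : \Sobolev \to H^s(\mathbb{R})$ furnished by the preceding proposition, transporting the problem to the classical Sobolev embedding on $H^s(\mathbb{R})$ and then pulling the resulting $L^\infty$ bound back through the inverse transmutation. Concretely, given $u \in \Sobolev$, I would set $g := \Trans u \in H^s(\mathbb{R})$, so that by the isometry $\|g\|_{H^s(\mathbb{R})} = \|u\|_{\Sobolev}$, invoke the classical one-dimensional embedding (valid precisely because $s > 1/2$) to conclude that $g$ admits a continuous representative satisfying $\|g\|_{L^\infty(\mathbb{R})} \le C_s \|g\|_{H^s(\mathbb{R})}$, and then recover $u$ via an explicit inversion formula for $\Trans$.

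The key algebraic step is to invert the definition of the transmutation operator. Since $(\Trans u)(y) = \omega(\psi^{-1}(y))\, u(\psi^{-1}(y))$, substituting $y = \psi(t)$ yields $g(\psi(t)) = \omega(t)\, u(t)$, whence
\begin{equation*}
    u(t) = \frac{g(\psi(t))}{\omega(t)}, \qquad t \in \mathbb{R}.
\end{equation*}
Because $\psi$ is a $C^\infty$-diffeomorphism by (H1) and $\omega$ is smooth and strictly positive by (H2), this identity exhibits $u$ as a continuous function of $t$, so the distribution $u$ indeed possesses a continuous representative. This is exactly what establishes the embedding $\Sobolev \hookrightarrow C^0_{\psi,\omega}(\mathbb{R})$ and legitimizes the pointwise evaluations in the estimate.

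For the quantitative bound, I would note that since $\psi$ maps $\mathbb{R}$ onto $\mathbb{R}$, the argument $\psi(t)$ ranges over all of $\mathbb{R}$, so $|g(\psi(t))| \le \|g\|_{L^\infty(\mathbb{R})}$ for every $t$. Chaining the inversion formula with the classical embedding and the isometry then gives
\begin{equation*}
    |u(t)| = \frac{|g(\psi(t))|}{\omega(t)} \le \frac{\|g\|_{L^\infty(\mathbb{R})}}{\omega(t)} \le \frac{C_s}{\omega(t)}\,\|g\|_{H^s(\mathbb{R})} = \frac{C_s}{\omega(t)}\,\|u\|_{\Sobolev},
\end{equation*}
which is the asserted inequality with the \emph{same} constant $C_s$ as in the Euclidean theory.

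The argument is essentially mechanical once the transmutation isometry and the inversion formula are in hand; there is no genuine analytic obstacle beyond the classical embedding itself. The one point deserving care is the passage from an almost-everywhere equivalence class to a bona fide continuous representative: one must check that the continuous version of $g$ produced by the classical embedding, composed with the diffeomorphism $\psi$ and divided by the positive smooth factor $\omega$, yields precisely the distinguished continuous representative of $u$ rather than merely an a.e.-equal function. This follows because $\Trans$ acts by composition with a diffeomorphism and multiplication by a smooth positive weight, operations that commute with the selection of continuous representatives. I would also flag the sharpness of the constant as worth an explicit remark: the surjectivity of $\psi$ onto $\mathbb{R}$ means no constant is lost when transferring the $L^\infty$ bound, so $C_s$ is exactly the classical embedding constant.
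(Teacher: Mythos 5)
Your proposal is correct and follows essentially the same route as the paper: transport $u$ to $g=\Trans u\in H^s(\mathbb{R})$, apply the classical embedding for $s>1/2$, and pull the $L^\infty$ bound back through the inversion formula $u(t)=g(\psi(t))/\omega(t)$ using the isometry. Your additional remarks on the continuous representative and the sharpness of the constant are welcome refinements but do not change the argument.
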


\begin{proof}
    Let $u \in \Sobolev$. Define $v = \Trans u$. By the proposition above, $v \in H^s(\mathbb{R})$. Since $s > 1/2$, the classical Sobolev Embedding Theorem implies $v \in C_0(\mathbb{R}) \cap L^\infty(\mathbb{R})$ with the bound:
    \begin{equation*}
        \|v\|_{L^\infty} \leq C_s \|v\|_{H^s}.
    \end{equation*}
    Recalling the explicit inversion formula from the transmutation definition, we have $u(t) = \frac{1}{\omega(t)} v(\psi(t))$. Taking the absolute value:
    \begin{equation*}
        |u(t)| = \frac{1}{|\omega(t)|} |v(\psi(t))| \leq \frac{1}{|\omega(t)|} \|v\|_{L^\infty} \leq \frac{C_s}{|\omega(t)|} \|v\|_{H^s}.
    \end{equation*}
    Since $\|v\|_{H^s} = \|u\|_{\Sobolev}$ by isometry, the result follows.
\end{proof}

This embedding result is visually illustrated in Figure \ref{fig:envelope}, showing how the density acts as a physical envelope for the signal.

\begin{figure}[H]
    \centering
    \includegraphics[width=0.7\textwidth]{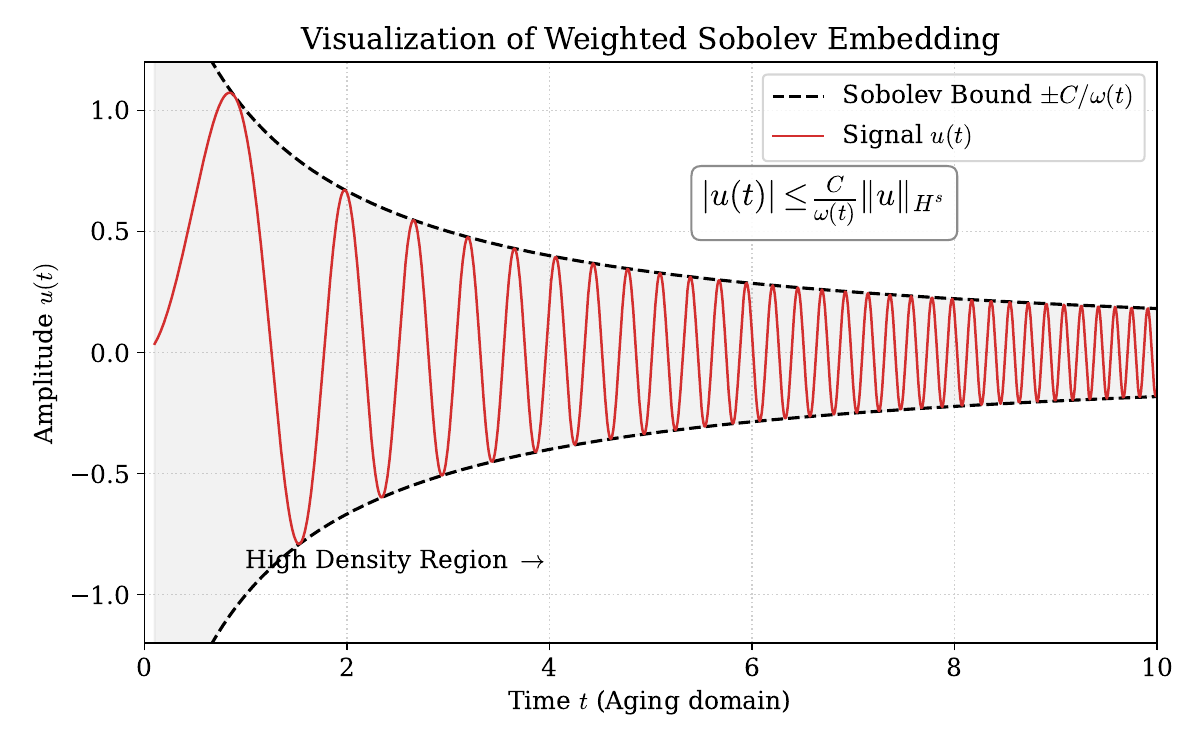}
    \caption{\textbf{Physical interpretation of the Weighted Sobolev Embedding.} The function $u(t)$ (red) represents a finite-energy signal in the aging medium. As the medium density $\omega(t)$ increases, the signal is physically constrained to decay within the envelope $\pm C_s/\omega(t)$ (dashed lines), regardless of its frequency. This illustrates the "amplitude suppression" mechanism inherent to the weighted topology.}
    \label{fig:envelope}
\end{figure}

\begin{theorem}[Elliptic Regularity and Isomorphism]
\label{thm:regularity}
    Let $\alpha \in (0, 2)$. Consider the fractional aging equation with spectral shift:
    \begin{equation}
        \mathfrak{D}_{\psi,\omega}^{(\alpha)} u + u = f.
    \end{equation}
    For any source term $f \in H^s_{\psi,\omega}$ (including singular distributions like $\delta_{\psi,\omega}$ if $s$ is low enough), the solution exists uniquely and gains $\alpha$ degrees of regularity:
    \begin{equation}
        u \in H^{s+\alpha}_{\psi,\omega}.
    \end{equation}
    Moreover, the operator $(\mathfrak{D}_{\psi,\omega}^{(\alpha)} + I)$ is a topological isomorphism between $H^{s+\alpha}_{\psi,\omega}$ and $H^s_{\psi,\omega}$.
\end{theorem}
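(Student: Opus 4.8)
The plan is to diagonalize the problem through the Weighted Fourier Transform and reduce the equation to division by a spectral multiplier that is elliptic of order $\alpha$. First I would apply $\WFT$ to both sides of $\mathfrak{D}_{\psi,\omega}^{(\alpha)} u + u = f$ in $\Sdist$. By the Spectral Mapping Theorem for aging operators, the non-local operator is converted into multiplication by its Bernstein symbol, so the equation becomes the pointwise algebraic identity
\[
    \bigl(\Phi(i\xi) + 1\bigr)\,\WFT[u](\xi) = \WFT[f](\xi), \qquad \Phi(\lambda) = \lambda^{\alpha},
\]
interpreted via the distributional pairing defining $\WFT$ on $\Sdist$. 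Writing $m(\xi) := \Phi(i\xi) + 1 = (i\xi)^{\alpha} + 1$ for the full symbol of $\mathfrak{D}_{\psi,\omega}^{(\alpha)} + I$, the candidate solution is $u := \WFT^{-1}\bigl[\WFT[f]/m\bigr]$, and the entire theorem reduces to controlling the division by $m$ on the weighted spectral side, together with the observation that $1/m$ is a smooth symbol with polynomially controlled derivatives, hence an admissible multiplier preserving $\Sdist$.

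The analytical core is the two-sided elliptic estimate
\[
    c\,(1+|\xi|^2)^{\alpha/2} \le |m(\xi)| \le C\,(1+|\xi|^2)^{\alpha/2}, \qquad \xi \in \mathbb{R}.
\]
The upper bound is immediate from $|(i\xi)^{\alpha}| = |\xi|^{\alpha}$. The lower bound is where the hypothesis $\alpha \in (0,2)$ becomes indispensable: since $(i\xi)^{\alpha} = |\xi|^{\alpha} e^{\,i\,\mathrm{sgn}(\xi)\,\alpha\pi/2}$ has argument of modulus $\alpha\pi/2 < \pi$, the symbol never lands on the negative real axis, so $m(\xi) = 1 + (i\xi)^{\alpha}$ cannot vanish. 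I would make this quantitative by minimizing $|m(\xi)|^2 = 1 + 2|\xi|^{\alpha}\cos(\alpha\pi/2) + |\xi|^{2\alpha}$ over the radial variable $|\xi|^{\alpha}$, which yields the uniform bound $|m(\xi)| \ge \sin(\alpha\pi/2) > 0$ in the high-angle regime $\alpha \in (1,2)$ and $|m(\xi)| \ge 1$ for $\alpha \in (0,1]$. The degeneracy of this constant exactly as $\alpha \to 2$ (where $\sin(\alpha\pi/2) \to 0$) is the structural reason for excluding the endpoint.

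Granting the elliptic estimate, the conclusions follow from the Completeness and Isometry proposition together with the spectral definition of the weighted norm. The lower bound gives the a priori estimate
\[
    \|u\|_{H^{s+\alpha}_{\psi,\omega}}^2 = \int_{\mathbb{R}} (1+|\xi|^2)^{s+\alpha}\,\frac{|\WFT[f](\xi)|^2}{|m(\xi)|^2}\,d\xi \le c^{-2}\,\|f\|_{H^s_{\psi,\omega}}^2,
\]
which proves simultaneously that the solution gains $\alpha$ derivatives and that the solution operator is bounded from $H^s_{\psi,\omega}$ into $H^{s+\alpha}_{\psi,\omega}$. Because $m$ does not vanish, $\WFT[u] = \WFT[f]/m$ is uniquely determined, yielding uniqueness. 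The upper bound shows that the forward operator $\mathfrak{D}_{\psi,\omega}^{(\alpha)} + I$ is bounded $H^{s+\alpha}_{\psi,\omega} \to H^s_{\psi,\omega}$; since the two maps are mutually inverse by construction, $(\mathfrak{D}_{\psi,\omega}^{(\alpha)}+I)$ is a topological isomorphism between the two weighted Sobolev scales, and the low-$s$ case covering $f = \delta_{\psi,\omega}$ is subsumed automatically, as the argument never used positivity of $s$.

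The step I expect to be the main obstacle is securing the uniform lower bound on $|m(\xi)|$ in the \emph{general} Bernstein-symbol setting of \cite{DorregoJMAA}, rather than for the explicit model $\Phi(\lambda)=\lambda^{\alpha}$ treated above. There one must replace $(i\xi)^{\alpha}$ by an abstract symbol $\Phi(i\xi)$ and verify that its angular range on the imaginary axis stays confined to a sector strictly inside $\{|\arg z| < \pi\}$, so that the shift by the identity keeps the symbol off the negative real axis with a constant uniform in $\xi$. Establishing that this sector condition persists for the full admissible class of aging operators, and reconciling it with the fact that $\lambda^{\alpha}$ for $\alpha\in(1,2)$ falls outside the classical complete-Bernstein class, is the delicate point that underpins the validity of the stated range $\alpha \in (0,2)$.
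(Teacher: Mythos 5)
Your proposal follows essentially the same route as the paper's proof: diagonalize via $\WFT$, observe that the sector condition $|\arg(i\xi)^\alpha| = \alpha\pi/2 < \pi$ keeps the symbol $(i\xi)^\alpha + 1$ away from zero, and conclude by boundedness of the multiplier $(1+|\xi|^2)^{\alpha/2}/|(i\xi)^\alpha+1|$. Your version is in fact slightly more complete, since you supply the explicit lower-bound constant $\sin(\alpha\pi/2)$ (correctly identifying why the endpoint $\alpha = 2$ degenerates) and you verify boundedness of the forward operator as well as of its inverse, which the paper leaves implicit when asserting the topological isomorphism.
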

\begin{proof}
    Applying the distributional Weighted Fourier Transform, the differential equation maps to the algebraic equation:
    \begin{equation*}
        \left( (i\xi)^\alpha + 1 \right) \widehat{u}(\xi) = \widehat{f}(\xi).
    \end{equation*}
    Since $\alpha \in (0, 2)$, the angular range of $(i\xi)^\alpha$ is strictly contained in $(-\pi, \pi)$, implying that the symbol $\sigma(\xi) = (i\xi)^\alpha + 1$ never vanishes on $\mathbb{R}$. Moreover, since $\sigma(\xi) \to 1$ as $\xi \to 0$ and $|\sigma(\xi)| \to \infty$ as $|\xi| \to \infty$, there exists a constant $c > 0$ such that $|\sigma(\xi)| \ge c$. Thus, the inverse operation is well-defined. We estimate the norm:
    \begin{equation*}
        \|u\|_{H^{s+\alpha}_{\psi,\omega}}^2 = \int_{-\infty}^{\infty} (1+|\xi|^2)^{s+\alpha} \frac{|\widehat{f}(\xi)|^2}{|(i\xi)^\alpha + 1|^2} \, d\xi.
    \end{equation*}
    The multiplier function $M(\xi) = \frac{(1+|\xi|^2)^{\alpha/2}}{|(i\xi)^\alpha + 1|}$ is continuous and bounded on $\mathbb{R}$ (behaving as $1$ near $\xi=0$ and approaching $1$ as $|\xi|\to\infty$). Therefore, there exists a constant $C>0$ such that:
    \begin{equation*}
        \|u\|_{H^{s+\alpha}_{\psi,\omega}}^2 \leq C \int_{-\infty}^{\infty} (1+|\xi|^2)^s |\widehat{f}(\xi)|^2 \, d\xi = C \|f\|_{H^s_{\psi,\omega}}^2.
    \end{equation*}
    This proves the regularity gain and the continuous dependence on the data.
\end{proof}

\begin{corollary}[Impulse Response and Decay Envelope]
Consider the impulsive aging equation $(\mathfrak{D}_{\psi,\omega}^{(\alpha)} + I) \mathcal{G}_{t_0} = \delta_{\psi,\omega}(t - t_0)$ with $\alpha > 1$. The fundamental solution $\mathcal{G}_{t_0}(t)$ belongs to $H^{s+\alpha}_{\psi,\omega}$ for $s < -1/2$. By the Sobolev embedding (Theorem \ref{thm:embedding}), this solution is a continuous function satisfying:
\begin{equation}
    |\mathcal{G}_{t_0}(t)| \leq \frac{C}{\omega(t) \omega(t_0)}, \quad \forall t \in \mathbb{R}.
\end{equation}
This confirms that the physical weight $\omega$ acts as a bilateral constraint, scaling both the source's impact at $t_0$ and the signal's decay at $t$.
\end{corollary}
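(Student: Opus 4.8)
The plan is to chain the three results already established: the transmutation image of the weighted delta, the elliptic isomorphism of Theorem~\ref{thm:regularity}, and the embedding of Theorem~\ref{thm:embedding}. The two reciprocal weight factors in the envelope will arise at two separate stages — the factor $\omega(t_0)^{-1}$ from the Sobolev norm of the source, and the factor $\omega(t)^{-1}$ from the embedding evaluated at the observation point $t$.

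First I would transport $\delta_{\psi,\omega}(\cdot-t_0)$ through $\Trans$. Starting from the defining identity $(\delta_{\psi,\omega}(\cdot-t_0),\phi)_{\psi,\omega}=\phi(t_0)$ together with the pointwise relation $(\Trans\phi)(\psi(t_0))=\omega(t_0)\phi(t_0)$ — or, equivalently, by pushing the kernel representation of Lemma~\ref{lem:weighted_delta} through the change of variables $y=\psi(t)$, which contributes a Jacobian factor $\psi'(t_0)$ that cancels against the normalization — I would obtain the clean identity $\Trans[\delta_{\psi,\omega}(\cdot-t_0)]=\omega(t_0)^{-1}\,\delta_{\psi(t_0)}$, a scaled classical Dirac mass located at the stretched point $\psi(t_0)$. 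Since $\mathcal{F}[\delta_{y_0}](\xi)=e^{-iy_0\xi}$ has unit modulus, the quantity $\|\delta_{y_0}\|_{H^s(\mathbb{R})}^2=\int_{-\infty}^{\infty}(1+|\xi|^2)^s\,d\xi$ is finite for $s<-1/2$ and, crucially, independent of $y_0$. Invoking the $H^s$-isometry of $\Trans$ (the Completeness and Isometry proposition) then yields $\|\delta_{\psi,\omega}(\cdot-t_0)\|_{H^s_{\psi,\omega}}=c_s\,\omega(t_0)^{-1}$, which is the origin of the first reciprocal weight.

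Next I would apply the elliptic isomorphism with $f=\delta_{\psi,\omega}(\cdot-t_0)\in H^s_{\psi,\omega}$. Theorem~\ref{thm:regularity} then produces a unique solution $\mathcal{G}_{t_0}\in H^{s+\alpha}_{\psi,\omega}$ satisfying $\|\mathcal{G}_{t_0}\|_{H^{s+\alpha}_{\psi,\omega}}\le C\,\omega(t_0)^{-1}$. The admissible range of $s$ is precisely what forces the hypothesis $\alpha>1$: I need $s<-1/2$ so that the delta lies in $H^s_{\psi,\omega}$, and simultaneously $s+\alpha>1/2$ so that the output can be fed into the embedding; the window $1/2-\alpha<s<-1/2$ is nonempty exactly when $\alpha>1$ (while $\alpha<2$ keeps us inside the hypothesis of Theorem~\ref{thm:regularity}). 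Fixing any such $s$, Theorem~\ref{thm:embedding} gives continuity of $\mathcal{G}_{t_0}$ and the pointwise bound $|\mathcal{G}_{t_0}(t)|\le C_{s+\alpha}\,\omega(t)^{-1}\,\|\mathcal{G}_{t_0}\|_{H^{s+\alpha}_{\psi,\omega}}$; composing this with the norm estimate from the previous paragraph delivers the claimed bilateral envelope $C/(\omega(t)\omega(t_0))$.

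The only genuinely delicate step is the transmutation identity for the weighted delta: one must verify that $\Trans$ sends the concentrated source to a constant multiple of a classical Dirac mass at $\psi(t_0)$, with the multiple equal to $\omega(t_0)^{-1}$ and the resulting $H^s$-norm otherwise independent of the concentration point $t_0$. Once that is secured, the remainder is a mechanical composition of the cited isometry, isomorphism, and embedding, together with the elementary index bookkeeping that ties the admissible exponent $s$ to the hypothesis $\alpha>1$.
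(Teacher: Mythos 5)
Your proposal is correct and follows exactly the route the paper intends: the corollary is stated without proof as a chaining of Theorem~\ref{thm:regularity} and Theorem~\ref{thm:embedding}, and your index bookkeeping ($1/2-\alpha < s < -1/2$, nonempty precisely when $\alpha>1$) matches the hypotheses. Your computation that $\Trans[\delta_{\psi,\omega}(\cdot-t_0)]=\omega(t_0)^{-1}\delta_{\psi(t_0)}$, hence $\|\delta_{\psi,\omega}(\cdot-t_0)\|_{H^s_{\psi,\omega}}=c_s\,\omega(t_0)^{-1}$ independently of the concentration point, is the one genuinely nontrivial step and is exactly the (unstated) source of the second factor $\omega(t_0)^{-1}$ in the paper's envelope.
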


\begin{remark}
    The shift term "$+u$" is necessary to ensure invertibility at zero frequency (where the pure operator $\mathfrak{D}_{\psi,\omega}^{(\alpha)}$ is singular). This reflects the fact that $H^s_{\psi,\omega}$ spaces model Bessel-potential type regularity rather than Riesz-potential type homogeneity.
\end{remark}

\begin{remark}[Unified Geometric Characterization: The Hadamard Case]
Our framework recovers the classical Hadamard fractional calculus by setting $\psi(t) = \ln(t)$ for $t > 0$ and $\omega(t) = 1$. In this case, the embedding theorem predicts a constant bound (since $1/\omega = 1$), consistent with the logarithmic growth of the underlying geometry. This demonstrates how the structural pair $(\psi, \omega)$ serves as a dictionary to translate regularity results across different fractional regimes.
\end{remark}
\section{Conclusions}

In this work, we have successfully extended the unified spectral framework initiated in \cite{DorregoJMAA} to the realm of generalized functions. By employing the "Structure Transport" method via transmutation operators, we avoided ad-hoc definitions, constructing the Weighted Schwartz Space $\Sspace$ and its dual $\Sdist$ in a manner intrinsically compatible with the aging geometry.

Our analysis revealed a dual geometric mechanism imposed by the medium's heterogeneity: while the density $\omega(t)$ dilutes the impact of singular sources (scaling the Dirac delta as $\delta_{\psi,\omega} \sim (\omega^2 \psi')^{-1} \delta$), it simultaneously enforces a strict decay envelope on regular signals via the Weighted Sobolev Embedding ($|u| \lesssim \omega^{-1}$).

These results provide the necessary functional analytic foundation to address well-posedness questions for non-linear evolution equations. Future investigations will focus on the associated Cauchy problem via semigroup theory and the numerical implementation of these spectral operators for singular control problems.

\end{document}